\tikzset{
   highlight/.style={line width=.5cm,color=black!22,cap=round,join=round,opacity=0.3}
} 
\newcommand{\ol}[1]{\overline{#1}}
\newtheorem{theorem}{Theorem}[section]
\newtheorem{prop}[theorem]{Proposition}
\newtheorem{definition}[theorem]{Definition}
\newtheorem{example}{Example}
\newtheorem{fact}{Fact}
\DeclareMathOperator{\sw}{sw}
\DeclareMathOperator{\Geo}{Geo}
\DeclareMathOperator{\diam}{diam}
\newcommand{\N}{\mathbb{N}} 
\newcommand{\Z}{\mathbb{Z}}
\newcommand{\cS}{{\mathcal S}}
\newcommand{\BioGAP}{\textsc{BioGAP}}
\newcommand{\GAP}{\textsc{GAP}}
\title{Bacterial phylogeny in the Cayley graph}
\author{Chad Clark, Attila Egri-Nagy, Andrew R.~Francis, Volker Gebhardt}
\address{Centre for Research in Mathematics, School of Computing, Engineering and Mathematics, Western Sydney University, Locked Bag 1797, Penrith, NSW 2751, Australia}
\email{\texttt{17485151@student.uws.edu.au}, \texttt{\{A.Egri-Nagy,A.Francis,V.Gebhardt\}\-@westernsydney.edu.au}}
\keywords{genome rearrangement, finite group, Cayley graph, partial order, reversal median problem}
\begin{document}
\maketitle
\begin{abstract}
Many models of genome rearrangement involve operations (e.g. inversions and translocations) that are self-inverse, and hence generate a group acting on the space of genomes.  This gives a correspondence between genome arrangements and the elements of a group, and consequently, between evolutionary paths and walks on the Cayley graph.  Many common methods for phylogeny reconstruction rely on calculating the minimal distance between two genomes; this omits much of the other information available from the Cayley graph.  In this paper we begin an exploration of some of this additional information, in particular describing the phylogeny as a Steiner tree within the Cayley graph, and exploring the ``interval'' between two genomes.  While motivated by problems in systematic biology, many of these ideas are of independent group-theoretic interest.
\end{abstract}

\section{Introduction}

Large-scale genome rearrangements are widely used for phylogenetic inference, especially in bacteria, because they occur frequently enough to use as a measure of evolutionary distance, but not so frequently that all signal is lost.  Each genome is then thought of as an arrangement of regions of DNA that are present in all the genomes under study.  Such regions need not be functional DNA (genes), but are identified through genome alignment software such as MAUVE~\cite{darling2004mauve}.  

The most commonly-studied rearrangement process in bacteria is \emph{inversion} (others include translocation, deletion, insertion, etc).  Inversion involves excision of a segment of the circular chromosome, followed by reinsertion in the same place but with orientation reversed.  This can thus be thought of as a permutation of the regions on the genome, or a signed permutation if one accounts for orientation.  

Because inversion has order 2, in particular is invertible, the set of allowable inversions generates a group that acts on the space of bacterial genomes.  Choices of allowable inversions give rise to numerous group-theoretic models (see~\cite{egri2014group}).  Given a choice of model and associated set of inversions, a sequence of inversions corresponds to a sequence of generators, and describes a walk on the Cayley graph of the group.  Once one chooses a reference genome, this genome corresponds to the identity element of the group, and every other genome corresponds to a unique group element given by the word in the generators on a path to the genome through the Cayley graph.  

In other words, by using the inversions as generators of a group, and by fixing a reference genome, we have a one-to-one correspondence between genomes and group elements.  Furthermore, we have a correspondence between paths on the Cayley graph and sequences of evolutionary events.  This means that \emph{the Cayley graph provides a map of genome space}.  

For phylogeny, because an actual series of evolutionary events is a path on the Cayley graph, the true phylogeny is a tree on the Cayley graph whose leaves are the extant genomes (assuming there is no homoplasy: the paths do not visit the same genome twice).  A parsimonious explanation of the evolutionary history is then a \emph{Steiner tree} connecting the genomes of interest in the Cayley graph.  

So, an alternative formulation of the phylogeny reconstruction problem is the problem of finding a Steiner tree in a Cayley graph.  This can be a very hard problem, as the genomes we consider typically have 60--80 regions, and so the group that we are interested in can have over $10^{100}$ elements. 

In this paper we begin to address this challenge in two directions.  First we study the ``interval'' between two group elements, which is a ranked poset describing the group elements on geodesics between the elements.  Second, we approach the Steiner tree problem by beginning with three group elements; this problem is also called the median problem.  We define the ``interior'' of three points, and show that it contains all median points (or Steiner points, or Fermat points).

Our definiton of an interval between two group elements (genomes) in Section~\ref{s:intervals} extends the traditional focus in genome rearrangements on finding the minimal distance between two genomes.  While currently it is possible to establish minimal distance very quickly for several models (see \cite{sankoff1992edit,hannenhalli1995transforming,bader2001linear} for the uniform distribution on inversions model, and \cite{egri2014group} for the 2-inversion model), this approach throws away a lot of information carried in the Cayley graph.  Traditional methods of studying such problems in genome rearrangements come from combinatorics~\cite{fertin2009combinatorics,gascuel2005mathematics} and from group theory~\cite{egri2014group,francis2014algebraic}.  Our main result on intervals (Theorem~\ref{t:main.intervals}) gives a condition under which two intervals are isomorphic, and comes after some exploration of various properties of intervals.

The median problem --- finding a point (genome rearrangement) that minimises the sum of the distances to a set of three other points --- has a long history in genomics and has been shown to be NP-hard for the most commonly studied model~\cite{caprara2003reversal} (though many heuristic solutions exist, e.g.~\cite{PevznerBourqe2002}).  In Section~\ref{s:median} we show (Prop.~\ref{p:interior}) that the median in a Cayley graph is in the interior of a triangle formed by the three group elements of interest.  The definition of the interior is a key part of the result, and this relies on the notion of interval, introduced in Section~\ref{s:intervals}.

We finish the paper (Section~\ref{s:algorithms}) with some algorithms for constructing the interval and some computational discussion of the problems. We use the \GAP~computer algebra system \cite{GAP4} and our \BioGAP~package \cite{biogap} to explore the combinatorics of these objects.

\section{Groups, generating sets, geodesic paths}

Let $G$ be a group with generators $S=\{s_1,\ldots,s_n\}$, so that every element of $G$ can be written as a finite product of generators and their inverses, denoted by $\langle S\rangle=G$. Sometimes we restrict these products to elements of $S$, not using their inverses, and we say that the (same) group is \emph{generated as a semigroup}. 
Let $S^*$ denote the free monoid generated by $S$, which is the set of all \emph{words}, i.e.\ finite sequences of the elements of $S$.
The empty word is denoted by $\varepsilon$.
The group element realized by the word $w$ is denoted by $\ol{w}$, thus $w\in S^*$ and $\ol{w}\in G$.

The \emph{geodesic distance} is defined by $d_S(g_1,g_2)=|u|$,
where $u$ is a minimal length word in $S^*$, called a \emph{geodesic} word, with the property that $g_1\ol{u}=g_2$ as group elements. This is also denoted by $g_1\overset{u}{\longrightarrow}g_2$.
\begin{fact}\label{f:distance.invariant}
Distance is invariant under left multiplication. That is, for $g,g_1,g_2\in G$, $d(g_1,g_2)=d(gg_1,gg_2)$.
\end{fact}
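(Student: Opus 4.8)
The plan is to reduce the claimed equality of distances to an equality of the underlying sets of words realizing the two displacements, exploiting the fact that left multiplication by $g$ preserves the group relation used to define the distance.

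First I would recall the defining property: for a word $u\in S^*$, the relation $g_1\overset{u}{\longrightarrow}g_2$ holds precisely when $g_1\ol{u}=g_2$ in $G$. The key observation is that this identity is equivalent to $(gg_1)\ol{u}=gg_2$: left-multiplying $g_1\ol{u}=g_2$ by $g$ yields $gg_1\ol{u}=gg_2$, while left-multiplying the latter by $g^{-1}$ recovers the former. Hence the set $\{u\in S^*:\ g_1\ol{u}=g_2\}$ of words joining $g_1$ to $g_2$ coincides \emph{exactly} with the set $\{u\in S^*:\ (gg_1)\ol{u}=gg_2\}$ of words joining $gg_1$ to $gg_2$. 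Crucially, this is a genuine equality of sets, not merely a length-preserving bijection between them.

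Second, since $d(g_1,g_2)$ and $d(gg_1,gg_2)$ are each defined as the minimum of $|u|$ taken over these two sets of words, and the two sets are identical, the two minima agree. This gives $d(g_1,g_2)=d(gg_1,gg_2)$, as required.

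The argument is elementary, and I do not anticipate a genuine obstacle; the only point requiring care is that the distance is defined via words $u\in S^*$, that is, using positive generators only, since $G$ may be generated merely as a semigroup. The equivalence $g_1\ol{u}=g_2 \iff (gg_1)\ol{u}=gg_2$ holds for each fixed $u$ independently of whether inverses of generators are admitted as letters, so the restriction to $S^*$ causes no difficulty: precisely the same collection of admissible words is available for both pairs, and the minimal length is therefore unchanged.
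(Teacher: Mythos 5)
Your proof is correct and follows essentially the same route as the paper: left-multiplying the relation $g_1\ol{u}=g_2$ by $g$ (and by $g^{-1}$ for the converse) identifies the words joining $g_1$ to $g_2$ with those joining $gg_1$ to $gg_2$, so the minimal lengths coincide. Your version is slightly more explicit in noting that the two sets of words are literally equal rather than merely in bijection, but this is the same argument.
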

\begin{proof}
If $u$ is a geodesic word such that $g_1\ol{u}=g_2$, then $gg_1\ol{u}=gg_2$.  This provides a one-to-one correspondence between paths $g_1\overset{u}{\longrightarrow}g_2$ and $gg_1\overset{u}{\longrightarrow}gg_2$.
\end{proof}

Let $\Geo_S(g_1,g_2)$ be the set of all geodesic words from $g_1$ to $g_2$, so that 
\[\Geo_S(g_1,g_2)=\{w\in S^*\mid g_1\ol{w}=g_2\text{ and }d_S(g_1,g_2)=|w|\}.\]
When no confusion arises  we omit the reference to the generating set and simply use $d(g_1,g_2)$ and $\Geo(g_1,g_2)$.
The length of a group element $g$ is defined by its distance from the identity: $\ell(g)=d(1_G,g)$ and we also write $\Geo(g)$ instead of $\Geo(1_G,g)$, where $1_G$ is the identity of the group.
In particular $\ell(1_G)=0$ and $\Geo(1_G)=\{\varepsilon\}$.
The \emph{diameter} of $G$ is defined by $\diam(G)=\max_{g\in G}{\ell(g)}$.

The \emph{Cayley graph} $\Gamma(G,S)$ of $G$ with respect to the generating set $S$ is the directed graph with group elements as nodes and the labelled edges encoding the action of $G$ on itself by its generators.
Thus, $g\overset{s}{\longrightarrow}gs$ is an edge, for $s\in S$. 

For a finite set $X$, let $\cS_X$ be the group of all permutations of $X$, the \emph{symmetric group on $X$}.

A \emph{partial order} is a reflexive, antisymmetric and transitive relation on a set, often denoted by the symbol $\leq$.
We write $x<y$ if $x\leq y$ and $x\neq y$.
The \emph{covering relation} is defined by $x\prec y$ if $x<y$ and there is no $z$ such that $x<z<y$.
An ordered set $X$ is an \emph{antichain} if for $x,y\in X$, $x\leq y$ implies $x=y$.
A map $\varphi$ is an \emph{order-isomorphism} if $x\leq y \Longleftrightarrow \varphi(x)\leq \varphi(y)$.
For more details on order theory see \cite{Lattices2ndEd,caspard2012finite}.

\section{Intervals}\label{s:intervals}

For a group $G$ we define a partial order of the group elements based on how they are ordered along geodesics with respect to a generating set $S$.
\begin{definition}[Prefix order] For group elements $g_1,g_2\in G=\langle S \rangle$ we write
$g_1\leq_S g_2$ if $\exists w=uv\in S^*$  such that  $\ol{w}=g_2, \ol{u}=g_1, w\in \Geo(g_2)$.
That is, $g_1\le_S g_2$ if there is a geodesic from the identity to $g_2$ such that $g_1$ is on it.
\end{definition}
\noindent We just write $\leq$ instead of $\leq_S$, if no confusion arises.

For Coxeter systems this partial order is called the \emph{weak order} \cite{bjorner2005combinatorics}, and it is a ranked (graded) lattice.
It is natural to ask that what are the properties of the weak order in general?

\begin{definition}[Ranking]
An ordered set $(X,\leq)$ is \emph{ranked} if there is a rank function $r:X\rightarrow (\N,\leq)$ preserving the covering relation:
$$x\prec y\Rightarrow r(y)=r(x)+1.$$
\end{definition}

\begin{prop}
Any group $G$ ordered by the prefix order $(G,\leq)$ is a ranked poset with  $\ell$ as the rank function.
\end{prop}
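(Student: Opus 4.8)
The plan is to first establish that $(G,\le)$ is genuinely a poset and then verify that $\ell$ preserves the covering relation; throughout I assume $\ell$ is $\N$-valued, i.e. every element has a finite geodesic word (as holds in the motivating finite-group setting). The technical engine behind both parts is the elementary observation that \emph{every prefix of a geodesic word is itself a geodesic}: if $w=uv\in\Geo(g_2)$, then replacing $u$ by any strictly shorter word representing $\ol{u}$ would shorten $w$ while still representing $g_2$, contradicting minimality. Hence $u\in\Geo(\ol{u})$, and writing $g_1=\ol{u}$ we obtain the additivity $\ell(g_2)=|w|=|u|+|v|=\ell(g_1)+|v|$. In particular $g_1\le g_2$ forces $\ell(g_1)\le\ell(g_2)$, with equality only when $v=\varepsilon$, i.e. $g_1=g_2$; so $\ell$ is strictly monotone on $<$.

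From this the poset axioms follow quickly. Reflexivity is witnessed by $w=u$, $v=\varepsilon$. Antisymmetry is immediate from strict monotonicity: $g_1\le g_2\le g_1$ gives $\ell(g_1)=\ell(g_2)$ and hence $g_1=g_2$. For transitivity I would glue geodesic segments. Given $g_1\le g_2$ via a geodesic $u_1v_1\in\Geo(g_2)$ with $\ol{u_1}=g_1$, and $g_2\le g_3$ via a geodesic $u_2v_2\in\Geo(g_3)$ with $\ol{u_2}=g_2$, consider the word $u_1v_1v_2$. It represents $g_2\ol{v_2}=g_3$, and, applying additivity to both decompositions, its length is $|u_1v_1|+|v_2|=\ell(g_2)+|v_2|=\ell(g_3)$. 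Thus $u_1v_1v_2\in\Geo(g_3)$ has $u_1$ as a prefix with $\ol{u_1}=g_1$, proving $g_1\le g_3$.

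Finally, to show $\ell$ is a rank function I must check $x\prec y\Rightarrow\ell(y)=\ell(x)+1$. Fix a witnessing geodesic $w=uv\in\Geo(y)$ with $\ol{u}=x$; since $x\neq y$ we have $v\neq\varepsilon$, and additivity gives $\ell(y)=\ell(x)+|v|$ with $|v|\ge 1$. If $|v|\ge 2$, write $v=sv'$ with $s\in S$ and set $z=\ol{us}=xs$. Then $us$ is a prefix of the geodesic $w$, so the prefix lemma yields $x\le z\le y$, while $\ell(z)=\ell(x)+1$ together with strict monotonicity gives $x<z<y$ — contradicting $x\prec y$. Hence $|v|=1$ and $\ell(y)=\ell(x)+1$, as required.

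I expect the main obstacle to be the transitivity step: unlike reflexivity and antisymmetry it is not merely a length computation, since one must produce an \emph{actual} geodesic through $g_1$ ending at $g_3$ rather than just bound distances. The gluing construction works precisely because the difference word $v_2$ realising $g_2\to g_3$ can be transplanted onto any geodesic reaching $g_2$ without loss of minimality — a fact underwritten by the additivity of $\ell$ along the prefix order (and ultimately by the left-invariance of distance, Fact~\ref{f:distance.invariant}). Everything else reduces to careful bookkeeping with the prefix lemma.
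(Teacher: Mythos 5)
Your proof is correct, and its core argument for the rank property is the same as the paper's: if the geodesic segment from $x$ to $y$ had length at least $2$, an intermediate prefix would produce an element strictly between them, contradicting $x\prec y$; the paper compresses exactly this into one sentence. Where you go genuinely beyond the paper is in verifying that $\le$ is actually a partial order. The paper's proof addresses only the covering relation and silently assumes reflexivity, antisymmetry and transitivity; your prefix lemma (every prefix of a geodesic is geodesic, hence $\ell(g_2)=\ell(g_1)+|v|$) gives antisymmetry via strict monotonicity of $\ell$, and your concatenation argument --- splicing the tail $v_2$ of a geodesic for $g_3$ onto a geodesic for $g_2$ that passes through $g_1$, and checking via additivity that the resulting word $u_1v_1v_2$ still lies in $\Geo(g_3)$ --- is the one step that is not a mere length computation and is nowhere supplied in the paper. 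This makes your write-up a more complete proof of the proposition as stated; the only cost is length, and your standing hypothesis that $\ell$ is $\N$-valued is automatic here since $S$ generates $G$.
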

\begin{proof}
$g_1\prec g_2$ means that $g_1$ is on a geodesic going to $g_2$ and there is no other element in between, therefore $\ell(g_2)=\ell(g_1)+1$.
\end{proof}

\begin{definition} The \emph{rank-set} $R_i$ is the set of elements of rank $i$:  $R_i=\{g\in G\mid\ell(g)=i\}$, $i\in\N$.
\end{definition}

The rank-sets are  easy to visualize for the simple and familiar example of $\Z^2$ with the standard generator set $\{x=(1,0),y=(0,1)\}$, as in Figure~\ref{fig:ZxZlayers}.
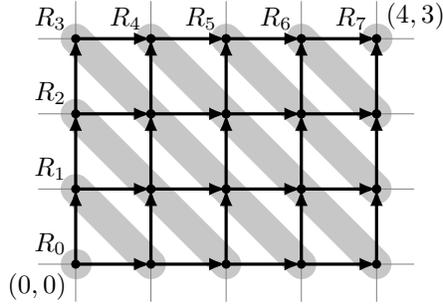
\begin{figure}
\begin{center}
\begin{tikzpicture}
\foreach \p in {0,1,2,3}{ 
  \draw[line width=0.4cm,color=black!22,cap=round,join=round] (\p,0)--(0,\p);
  \draw[above left]  (0,\p) node {$R_\p$};
  \pgfmathparse{\p+1} \let\q\pgfmathresult
  \draw[line width=0.4cm,color=black!22,cap=round,join=round] (\q,3)--(4,\p);
  \pgfmathparse{\p+4} \let\lab\pgfmathresult  
  \draw[above left]  (\q,3) node {$R_{\pgfmathprintnumber{\lab}}$};
}

  \draw[help lines] (-0.5,-0.5) grid (4.5,3.5);
  \fill (0,0) circle (0.06cm);
  \draw[below left] (0,0) node(n00){$(0,0)$};
  \draw[above right] (4,3) node(n43){$(4,3)$};

\foreach \x in {0,1,2,3,4}{
  \foreach \y in {0,1,2} {
    \pgfmathparse{\y+1} \let\height\pgfmathresult
    \draw[very thick,->,>=latex] (\x,\y)--(\x,\height); 
    \fill (\x,\height) circle (0.06cm);
  }
}
\foreach \y in {0,1,2,3}{
  \foreach \x in {0,1,2,3} {
    \pgfmathparse{\x+1} \let\w\pgfmathresult
    \draw[very thick,->,>=latex] (\x,\y)--(\w,\y); 
    \fill (\w,\y) circle (0.06cm);
  }
}

\end{tikzpicture}
\end{center}
\caption{The rank-sets of the interval $\big[(0,0),(4,3)\big]$ in $\Z^2$.}
\label{fig:ZxZlayers}
\end{figure}

\begin{example}[Free Groups]
The Cayley graphs of free groups are trees, and so there is only one geodesic to each element.
The rank-sets are formed simply by words of the same length.
\end{example}

\begin{example}
Another way to have unique geodesics for every group element in a group $G$ is if the group itself is the generating set.
In the prefix order $(G,\leq_G)$ for each $g\in G$, $g\neq 1_G$ we have one geodesic, a single edge.  That is, all non-identity elements have length 1.
\end{example}

\begin{definition}[Interval] For a group $G$ with a prefix order determined by a fixed generating set, the closed \emph{intervals} are defined by
$$I_h:=[1_G,h]=\left(\{g\in G \mid 1_G\leq g\leq h \},\leq \right).$$
\end{definition}
Due to the symmetries of the Cayley graph the identity is only a convenient choice, we can consider intervals between arbitrary elements (see Fact~\ref{f:distance.invariant}).
\begin{fact}
$[g_1,g_2]$ is order-isomorphic to $[1_G,g_1^{-1}g_2]$.
\end{fact}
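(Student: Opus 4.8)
The plan is to use the obvious symmetry of the Cayley graph that carries $g_1$ to the identity, namely left multiplication by $g_1^{-1}$. So I would set $\varphi\colon G\to G$, $\varphi(g)=g_1^{-1}g$, and prove that $\varphi$ restricts to an order-isomorphism $[g_1,g_2]\to[1_G,g_1^{-1}g_2]$. The map $\varphi$ is a bijection of $G$ with inverse $g\mapsto g_1 g$, so once I show that it sends the underlying set of $[g_1,g_2]$ onto that of $[1_G,g_1^{-1}g_2]$ and both preserves and reflects the order, the claim follows. The entire argument rests on Fact~\ref{f:distance.invariant}: since distance is invariant under left multiplication, $\varphi$ transports geodesics to geodesics, and the prefix order is defined purely in terms of geodesics.

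Before anything else I would make precise the order carried by $[g_1,g_2]$, since the prefix order in Section~\ref{s:intervals} is anchored at the identity. The natural reading is the prefix order \emph{based at} $g_1$: declare $g$ to lie in $[g_1,g_2]$ when $g$ sits on some geodesic from $g_1$ to $g_2$ (equivalently $d(g_1,g)+d(g,g_2)=d(g_1,g_2)$), and order two such elements by $g\preceq g'$ iff $g$ lies on a geodesic from $g_1$ to $g'$. When $g_1=1_G$ this reduces to the definition of $[1_G,g_2]$ given above, so the two notions agree at the base point.

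With this in hand the verification is a direct translation. Using Fact~\ref{f:distance.invariant} I would record the identities $d(g_1,g)=d(1_G,g_1^{-1}g)=\ell(\varphi(g))$ and $d(g,g')=d(\varphi(g),\varphi(g'))$, together with $\varphi(g_1)=1_G$ and $\varphi(g_2)=g_1^{-1}g_2$. These turn the statement ``$g$ lies on a geodesic from $g_1$ to $g'$'' into ``$\varphi(g)$ lies on a geodesic from $1_G$ to $\varphi(g')$'' and back, which is precisely $g\preceq g'\iff\varphi(g)\le\varphi(g')$. In particular $g\in[g_1,g_2]\iff\varphi(g)\in[1_G,g_1^{-1}g_2]$, so $\varphi$ is the required order-isomorphism.

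The distance-invariance steps are routine once set up, so the only real obstacle is the definitional one in the second paragraph: the prefix order of Section~\ref{s:intervals} is given only with the identity as least element, and the Fact silently extends it to an arbitrary least element $g_1$. The care needed is to fix the ``based at $g_1$'' order, check it is a genuine partial order agreeing with the original when $g_1=1_G$, and phrase it entirely through distances, so that the left-translation argument applies verbatim in both the order-preserving and the order-reflecting direction. After that, everything is a straightforward application of Fact~\ref{f:distance.invariant}.
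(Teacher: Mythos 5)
Your proposal is correct and is exactly the argument the paper intends: the Fact is stated without proof, with the surrounding text pointing to Fact~1 (invariance of distance under left multiplication) as the justification, and your map $g\mapsto g_1^{-1}g$ is precisely that left-translation symmetry. Your extra care in pinning down the order on $[g_1,g_2]$ (based at $g_1$, defined via geodesics) fills in a definitional point the paper leaves implicit, but it does not change the approach.
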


\begin{example}[Cyclic Groups]
$C_n=\langle x\rangle=\{x\mid x^n=1\}$ generated as a semigroup has only one geodesic to each element, thus its intervals are also linear orders. 
The situation changes if the cyclic group is generated as a group: $\diam_{\{x\}}(C_n)=n-1$ while $\diam_{\{x,x^{-1}\}}(C_n)=\lfloor\frac{n}{2}\rfloor$.
If $n$ is even, then $\big|\Geo\big(x^{\frac{n}{2}}\big)\big|=2$ and $\big[1,x^{\frac{n}{2}}\big]=C_n$. This shows that it is possible to have the entire group in an interval.
\end{example}

There are some immediate negative results about intervals in general.
To what extent does the interval depend on the distance between the elements? Is there a relation between $|\Geo(g)|$ and $\big|[1,g]\big|$?
In general, the length of the element does not determine the size of the interval uniquely. Again, $\Z^2$ demonstrates this easily (Fig.\ \ref{fig:ZxZlengthvswidth}).

\begin{figure}
\begin{center}
\begin{tikzpicture}
  \draw[line width=0.3cm,color=black!22,cap=round,join=round] (0,0)--(4,0); 
  \draw[help lines] (-0.5,-0.5) grid (4.5,1.5);
  \fill (0,0) circle (0.06cm);
  \draw[below left] (0,0) node(n00){$(0,0)$};
  \draw[below right] (4,0) node(n04){$(4,0)$};
  \foreach \x in {0,1,2,3} {
    \pgfmathparse{\x+1} \let\height\pgfmathresult
    \draw[very thick,->,>=latex] (\x,0)--(\height,0); 
    \fill (\height,0) circle (0.06cm);
  }
    \node  (x) at (0.5,-0.3) {$x$};

\end{tikzpicture}
\hskip1cm
\begin{tikzpicture}
  \fill[line width=0.3cm,color=black!22,cap=round,rounded corners] (-0.15,-0.15) rectangle (2.15,2.15); 
  \draw[help lines] (-0.5,-0.5) grid (2.5,2.5);
  \fill (0,0) circle (0.06cm);
  \draw[below left] (0,0) node(n00){$(0,0)$};
  \draw[above right] (2,2) node(n22){$(2,2)$};

\foreach \x in {0,1,2}{
  \foreach \y in {0,1} {
    \pgfmathparse{\y+1} \let\height\pgfmathresult
    \draw[very thick,->,>=latex] (\x,\y)--(\x,\height); 
    \draw[very thick,->,>=latex] (\y,\x)--(\height,\x); 
    \fill (\x,\height) circle (0.06cm);
    \fill (\height,\x) circle (0.06cm);
  }
}
  \node  (y) at (-0.3,0.5) {$y$};
      \node  (x) at (0.5,-0.3) {$x$};
\end{tikzpicture}
\end{center}
\caption{Two elements of $\Z^2$ with the same length (4), but intervals of different sizes: $\big|[(0,0),(4,0)]\big|=5$, while $\big|[(0,0),(2,2)]\big|=9$, and number of geodesics are 1 and 6 respectively.}
\label{fig:ZxZlengthvswidth}
\end{figure}
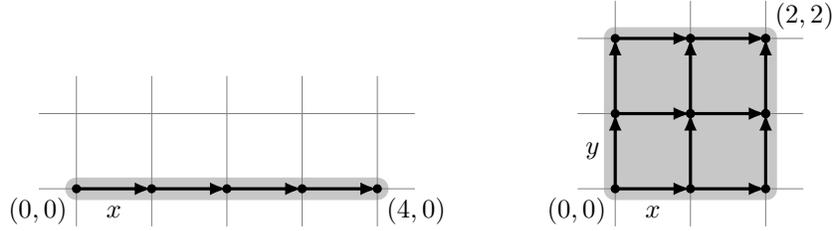

A partial order satisfies the Sperner property if no antichain is bigger than the size of the maximal rank-set~\cite{caspard2012finite}.
Intervals do not satisfy this property, and in general the number of paths can be bigger than the size of a maximal antichain, see Fig.\ \ref{fig:NonSperner}.
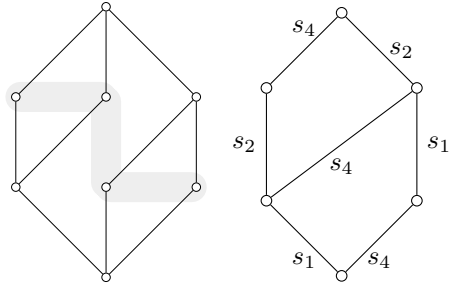
\begin{figure}
\begin{center}
\scalebox{0.8}{\begin{tikzpicture}
  \tikzstyle{every node}=[draw,circle,fill=white,minimum size=4pt,inner sep=0pt]
\draw (2,4.5) node (31) {};
\draw (.5,3) node (21) {};
\draw (2,3) node  (22) {};
\draw (3.5,3) node  (23) {};
\draw (.5,1.5) node (11) {};
\draw (2,1.5) node  (12) {};
\draw (3.5,1.5) node  (13) {};
\draw (2,0) node (01)  {};

\draw (01) -- (11);
\draw (01) -- (12);
\draw (01) -- (13);
\draw (11) -- (21);
\draw (11) -- (22);
\draw (12) -- (23);
\draw (13) -- (23);
\draw (31) -- (21);
\draw (31) -- (22);
\draw (31) -- (23);
\begin{pgfonlayer}{background layer}
\draw[highlight] (21)--(22)--(12)--(13);
\end{pgfonlayer}

\end{tikzpicture}}
\begin{tikzpicture}
  \tikzstyle{every node}=[draw,circle,fill=white,minimum size=4pt,inner sep=0pt]
  \tikzstyle{label}=[draw=none,fill=none,inner sep=2pt]

\draw (1,3.5) node (31) {};
\draw (0,2.5) node (21) {};
\draw (2,2.5) node (22)  {};
\draw (0,1)node (11) {};
\draw (2,1) node (12) {};
\draw (1,0) node (01)  {};

\draw (01) -- node [below,label] {$s_1$} (11);
\draw (01) -- node [below,label] {$s_4$}  (12);

\draw (11) -- node [left,label] {$s_2$} (21);
\draw (11) --  node [below,label] {$s_4$} (22);

\draw (12) --  node [right,label] {$s_1$} (22);

\draw (31) -- node [above,label] {$s_4$}(21);
\draw (31) -- node [right,label] {$s_2$} (22);

\end{tikzpicture}
\end{center}
\caption{\textbf{Left:} This example shows that intervals do not in general satisfy the Sperner property.  Here $\max_i|R_i|=3$, but the highlighted elements form an antichain of size 4. The interval encodes commuting transpositions in $\cS_4$: $s_1s_3s_2=s_3s_1s_2=s_4s_3s_1=s_4s_1s_3=(1,3,4,2)$, where we use a circular generating set, i.e.~$s_4=(4,1)$. \textbf{Right:} The interval $[(),(1,3,2)(4,5)]$ in $\cS_5$ generated by $\{s_1,\ldots,s_4\}$, where $s_i$ is the transposition $(i,i+1)$. The number of paths is 3 while the maximum size of an antichain is only 2.}
\label{fig:NonSperner}
\end{figure}

Lattices are partial orders where every pair of elements have a unique least and a greatest upper bound. In general  intervals in Cayley graphs are not lattices.
Any group with two generators satisfying $a^2=b^2$, $ab=ba$, both being non-identity and with no generator equivalent to these words will contain non-lattice intervals (see Fig.\ \ref{fig:nonlattice}). 
\begin{figure}
\begin{center}
\begin{tikzpicture}
  \tikzstyle{every node}=[draw,circle,fill=white,minimum size=4pt,inner sep=0pt]

\draw (1,3.5) node (31) {};
\draw (0,2.5) node (21) {};
\draw (2,2.5) node (22)  {};
\draw (0,1)node[fill=black] (11) {};
\draw (2,1) node[fill=black] (12) {};
\draw (1,0) node (01)  {};

\draw (01) -- (11);
\draw (01) -- (12);

\draw (11) -- (21);
\draw (11) -- (22);

\draw (12) -- (21);
\draw (12) -- (22);

\draw (31) -- (21);
\draw (31) -- (22);

\end{tikzpicture}
\hskip.5cm
\begin{tikzpicture}
  \tikzstyle{every node}=[draw,circle,fill=white,minimum size=4pt,inner sep=0pt]
  \tikzstyle{label}=[draw=none,fill=none,inner sep=2pt]

\draw (1,3.5) node (31) {};
\draw (0,2.5) node (21) {};
\draw (2,2.5) node (22)  {};
\draw (0,1)node (11) {};
\draw (2,1) node (12) {};
\draw (1,0) node (01)  {};

\draw (01) -- node [below,label] {$a$} (11) ;
\draw (01) -- node [below,label] {$b$} (12);

\draw (11) -- node [left,label] {$a$} (21);
\draw (11) -- node [above, very near start,label] {$b$} (22);

\draw (12) -- node [above, very near start,label] {$b$} (21);
\draw (12) -- node [right,label] {$a$} (22);

\draw (31) -- (21);
\draw (31) -- (22);

\end{tikzpicture}
\end{center}
\caption{\textbf{Left:} Minimal example of a non-lattice graded poset. The pair of filled elements have have two least upper bounds. \textbf{Right:} Labelling with two generators for possible non-lattice intervals. For instance $[(),(1,2)]$ with generators $(3,4)$ and $(1,2)(3,4)$.} 
\label{fig:nonlattice}
\end{figure}
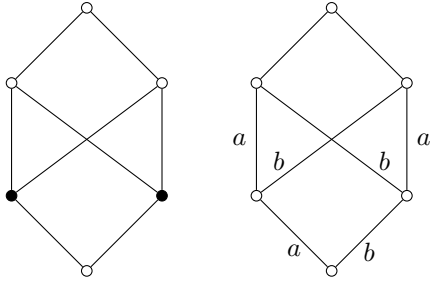

\section{Classifying group elements}\label{s:classifying}

The geodesic distance from a chosen reference point gives a
classification of group elements, the spheres around the reference.
Referring to properties of  intervals, we can have refined or
different classifications:

\begin{enumerate}
\item Same length: $g_1\sim_\ell g_2\Leftrightarrow \ell(g_1)=\ell(g_2)$,
\item Same number of paths: $g_1\sim_\# g_2\Leftrightarrow
  |\Geo(g_1)|=|\Geo(g_2)|$,
\item Same number of elements in intervals: $g_1\sim_{\mid\mid} g_2\Leftrightarrow |[1,g_1]|=|[1,g_2]|$,
\item Same interval: $g_1\sim_I g_2\Leftrightarrow [1,g_1]\cong[1,g_2]$, where $\cong$ denotes order isomorphism of partially ordered sets.
\end{enumerate}

\begin{figure}
\begin{center}
\begin{tikzpicture}[gnuplot]
\path (0.000,0.000) rectangle (14.000,7.000);
\gpcolor{color=gp lt color border}
\gpsetlinetype{gp lt border}
\gpsetlinewidth{1.00}
\draw[gp path] (1.688,0.985)--(1.868,0.985);
\draw[gp path] (13.447,0.985)--(13.267,0.985);
\node[gp node right] at (1.504,0.985) { 0};
\draw[gp path] (1.688,1.612)--(1.868,1.612);
\draw[gp path] (13.447,1.612)--(13.267,1.612);
\node[gp node right] at (1.504,1.612) { 1000};
\draw[gp path] (1.688,2.240)--(1.868,2.240);
\draw[gp path] (13.447,2.240)--(13.267,2.240);
\node[gp node right] at (1.504,2.240) { 2000};
\draw[gp path] (1.688,2.867)--(1.868,2.867);
\draw[gp path] (13.447,2.867)--(13.267,2.867);
\node[gp node right] at (1.504,2.867) { 3000};
\draw[gp path] (1.688,3.494)--(1.868,3.494);
\draw[gp path] (13.447,3.494)--(13.267,3.494);
\node[gp node right] at (1.504,3.494) { 4000};
\draw[gp path] (1.688,4.122)--(1.868,4.122);
\draw[gp path] (13.447,4.122)--(13.267,4.122);
\node[gp node right] at (1.504,4.122) { 5000};
\draw[gp path] (1.688,4.749)--(1.868,4.749);
\draw[gp path] (13.447,4.749)--(13.267,4.749);
\node[gp node right] at (1.504,4.749) { 6000};
\draw[gp path] (1.688,5.376)--(1.868,5.376);
\draw[gp path] (13.447,5.376)--(13.267,5.376);
\node[gp node right] at (1.504,5.376) { 7000};
\draw[gp path] (1.688,6.004)--(1.868,6.004);
\draw[gp path] (13.447,6.004)--(13.267,6.004);
\node[gp node right] at (1.504,6.004) { 8000};
\draw[gp path] (1.688,6.631)--(1.868,6.631);
\draw[gp path] (13.447,6.631)--(13.267,6.631);
\node[gp node right] at (1.504,6.631) { 9000};
\draw[gp path] (1.688,0.985)--(1.688,1.165);
\draw[gp path] (1.688,6.631)--(1.688,6.451);
\node[gp node center] at (1.688,0.677) { 0};
\draw[gp path] (3.158,0.985)--(3.158,1.165);
\draw[gp path] (3.158,6.631)--(3.158,6.451);
\node[gp node center] at (3.158,0.677) { 2};
\draw[gp path] (4.628,0.985)--(4.628,1.165);
\draw[gp path] (4.628,6.631)--(4.628,6.451);
\node[gp node center] at (4.628,0.677) { 4};
\draw[gp path] (6.098,0.985)--(6.098,1.165);
\draw[gp path] (6.098,6.631)--(6.098,6.451);
\node[gp node center] at (6.098,0.677) { 6};
\draw[gp path] (7.568,0.985)--(7.568,1.165);
\draw[gp path] (7.568,6.631)--(7.568,6.451);
\node[gp node center] at (7.568,0.677) { 8};
\draw[gp path] (9.037,0.985)--(9.037,1.165);
\draw[gp path] (9.037,6.631)--(9.037,6.451);
\node[gp node center] at (9.037,0.677) { 10};
\draw[gp path] (10.507,0.985)--(10.507,1.165);
\draw[gp path] (10.507,6.631)--(10.507,6.451);
\node[gp node center] at (10.507,0.677) { 12};
\draw[gp path] (11.977,0.985)--(11.977,1.165);
\draw[gp path] (11.977,6.631)--(11.977,6.451);
\node[gp node center] at (11.977,0.677) { 14};
\draw[gp path] (13.447,0.985)--(13.447,1.165);
\draw[gp path] (13.447,6.631)--(13.447,6.451);
\node[gp node center] at (13.447,0.677) { 16};
\draw[gp path] (1.688,6.631)--(1.688,0.985)--(13.447,0.985)--(13.447,6.631)--cycle;
\node[gp node center,rotate=-270] at (0.246,3.808) {frequency};
\node[gp node center] at (7.567,0.215) {geodesic length};
\node[gp node right] at (11.979,6.297) {S8};
\gpfill{rgb color={0.000,0.000,0.000},opacity=0.50} (12.163,6.220)--(13.079,6.220)--(13.079,6.374)--(12.163,6.374)--cycle;
\gpcolor{rgb color={0.000,0.000,0.000}}
\gpsetlinetype{gp lt plot 0}
\draw[gp path] (12.163,6.220)--(13.079,6.220)--(13.079,6.374)--(12.163,6.374)--cycle;
\gpfill{rgb color={0.000,0.000,0.000},opacity=0.50} (1.688,0.985)--(2.056,0.985)--(2.056,0.987)--(1.688,0.987)--cycle;
\draw[gp path] (1.688,0.985)--(1.688,0.986)--(2.055,0.986)--(2.055,0.985)--cycle;
\gpfill{rgb color={0.000,0.000,0.000},opacity=0.50} (2.055,0.985)--(2.791,0.985)--(2.791,0.991)--(2.055,0.991)--cycle;
\draw[gp path] (2.055,0.985)--(2.055,0.990)--(2.790,0.990)--(2.790,0.985)--cycle;
\gpfill{rgb color={0.000,0.000,0.000},opacity=0.50} (2.790,0.985)--(3.526,0.985)--(3.526,1.009)--(2.790,1.009)--cycle;
\draw[gp path] (2.790,0.985)--(2.790,1.008)--(3.525,1.008)--(3.525,0.985)--cycle;
\gpfill{rgb color={0.000,0.000,0.000},opacity=0.50} (3.525,0.985)--(4.261,0.985)--(4.261,1.061)--(3.525,1.061)--cycle;
\draw[gp path] (3.525,0.985)--(3.525,1.060)--(4.260,1.060)--(4.260,0.985)--cycle;
\gpfill{rgb color={0.000,0.000,0.000},opacity=0.50} (4.260,0.985)--(4.996,0.985)--(4.996,1.193)--(4.260,1.193)--cycle;
\draw[gp path] (4.260,0.985)--(4.260,1.192)--(4.995,1.192)--(4.995,0.985)--cycle;
\gpfill{rgb color={0.000,0.000,0.000},opacity=0.50} (4.995,0.985)--(5.731,0.985)--(5.731,1.483)--(4.995,1.483)--cycle;
\draw[gp path] (4.995,0.985)--(4.995,1.482)--(5.730,1.482)--(5.730,0.985)--cycle;
\gpfill{rgb color={0.000,0.000,0.000},opacity=0.50} (5.730,0.985)--(6.466,0.985)--(6.466,2.063)--(5.730,2.063)--cycle;
\draw[gp path] (5.730,0.985)--(5.730,2.062)--(6.465,2.062)--(6.465,0.985)--cycle;
\gpfill{rgb color={0.000,0.000,0.000},opacity=0.50} (6.465,0.985)--(7.201,0.985)--(7.201,3.042)--(6.465,3.042)--cycle;
\draw[gp path] (6.465,0.985)--(6.465,3.041)--(7.200,3.041)--(7.200,0.985)--cycle;
\gpfill{rgb color={0.000,0.000,0.000},opacity=0.50} (7.200,0.985)--(7.936,0.985)--(7.936,4.350)--(7.200,4.350)--cycle;
\draw[gp path] (7.200,0.985)--(7.200,4.349)--(7.935,4.349)--(7.935,0.985)--cycle;
\gpfill{rgb color={0.000,0.000,0.000},opacity=0.50} (7.935,0.985)--(8.671,0.985)--(8.671,5.588)--(7.935,5.588)--cycle;
\draw[gp path] (7.935,0.985)--(7.935,5.587)--(8.670,5.587)--(8.670,0.985)--cycle;
\gpfill{rgb color={0.000,0.000,0.000},opacity=0.50} (8.670,0.985)--(9.406,0.985)--(9.406,6.098)--(8.670,6.098)--cycle;
\draw[gp path] (8.670,0.985)--(8.670,6.097)--(9.405,6.097)--(9.405,0.985)--cycle;
\gpfill{rgb color={0.000,0.000,0.000},opacity=0.50} (9.405,0.985)--(10.141,0.985)--(10.141,5.360)--(9.405,5.360)--cycle;
\draw[gp path] (9.405,0.985)--(9.405,5.359)--(10.140,5.359)--(10.140,0.985)--cycle;
\gpfill{rgb color={0.000,0.000,0.000},opacity=0.50} (10.140,0.985)--(10.876,0.985)--(10.876,3.657)--(10.140,3.657)--cycle;
\draw[gp path] (10.140,0.985)--(10.140,3.656)--(10.875,3.656)--(10.875,0.985)--cycle;
\gpfill{rgb color={0.000,0.000,0.000},opacity=0.50} (10.875,0.985)--(11.611,0.985)--(11.611,2.005)--(10.875,2.005)--cycle;
\draw[gp path] (10.875,0.985)--(10.875,2.004)--(11.610,2.004)--(11.610,0.985)--cycle;
\gpfill{rgb color={0.000,0.000,0.000},opacity=0.50} (11.610,0.985)--(12.346,0.985)--(12.346,1.179)--(11.610,1.179)--cycle;
\draw[gp path] (11.610,0.985)--(11.610,1.178)--(12.345,1.178)--(12.345,0.985)--cycle;
\gpfill{rgb color={0.000,0.000,0.000},opacity=0.50} (12.345,0.985)--(13.081,0.985)--(13.081,1.005)--(12.345,1.005)--cycle;
\draw[gp path] (12.345,0.985)--(12.345,1.004)--(13.080,1.004)--(13.080,0.985)--cycle;
\gpfill{rgb color={0.000,0.000,0.000},opacity=0.50} (13.080,0.985)--(13.448,0.985)--(13.448,0.987)--(13.080,0.987)--cycle;
\draw[gp path] (13.080,0.985)--(13.080,0.986)--(13.447,0.986)--(13.447,0.985)--cycle;
\gpcolor{color=gp lt color border}
\gpsetlinetype{gp lt border}
\draw[gp path] (1.688,6.631)--(1.688,0.985)--(13.447,0.985)--(13.447,6.631)--cycle;
\gpdefrectangularnode{gp plot 1}{\pgfpoint{1.688cm}{0.985cm}}{\pgfpoint{13.447cm}{6.631cm}}
\end{tikzpicture}
\end{center}
\caption{Distribution of minimal lengths in $\cS_8$ generated by the
  circular generating set of transpositions.}
\label{fig:S8length}
\end{figure}
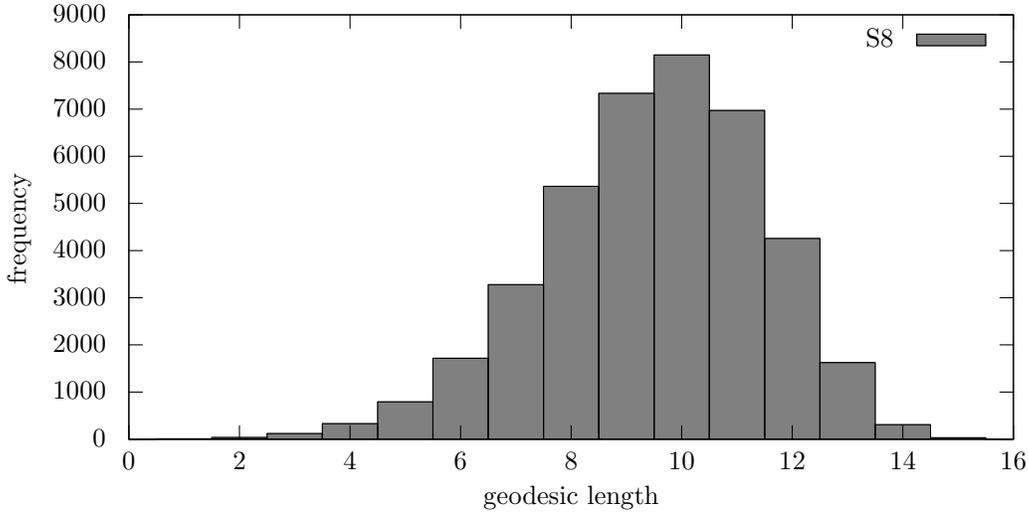
\begin{figure}
\begin{center}
\input{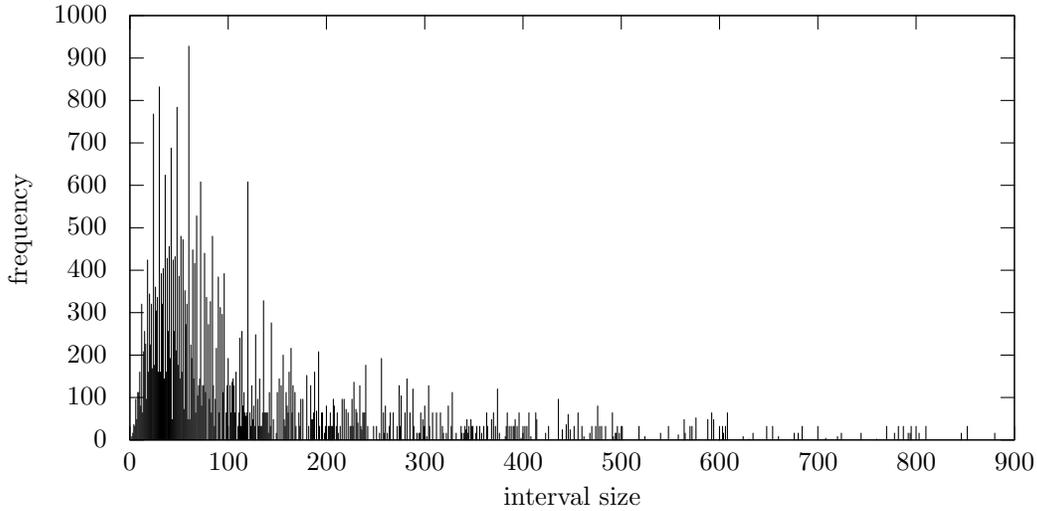}
\end{center}
\caption{Distribution of interval sizes in $\cS_8$ generated by the
  circular generating set of transpositions. There are 386 different interval lengths. Sporadic values are cut off, the maximum interval size is 4280.}
\label{fig:S8interval}
\end{figure}
The more properties of intervals we take into account the more refined the classifications are (see Fig.~\ref{fig:S8length} and \ref{fig:S8interval}). It is important to note, that the equivalence classes in different classifications are not in a single refinement hierarchy.
For instance, intervals of different length can have the same number of paths or same number of elements.

\subsection{Symmetries of generating sets and interval classification}
In practice, we would like to do the above classifications without fully calculating the potentially large intervals. This involves estimating or partially calculating some properties of an interval, just by looking at the permutation and the generating set. In some cases, we can easily decide equivalence.

An easy to calculate property of a permutation is its cycle-structure.
Unfortunately it is not true in general that conjugate elements have the same interval.
For instance, the symmetric group $\cS_5$ generated by the transpositions $\{s_1, \ldots, s_{4}\}$ has different lengths (thus different intervals) for $(1,3)$ and $(2,5)$, though they are conjugate elements in $\cS_5$. 
On the other hand, $(1,3)$ and $(3,5)$ have the same interval.
Closer inspection reveals that the latter two are conjugate under the flip symmetry of the `number line' segment, the sequence $1,2,3,4,5$, and thus the nontrivial symmetry the generator set has.
This suggests that if the generator set has some symmetries, then those symmetries will be retained by the intervals as well. In other words, if the generators are closed under conjugation, then elements conjugate by the same symmetries  have the same interval.

\begin{definition}[Normaliser]
The \emph{normaliser} of $S\subseteq\cS_X$ in $\cS_X$ is defined by $$N_{\cS_X}(S)=\{\sigma\in \cS_X\mid \sigma^{-1}S\sigma= S\}.$$
\end{definition}

\begin{example}
It is possible to have generating sets that are non-homogeneous in the sense that not all elements are conjugate, but the still share the same set of symmetries. Given 7 regions, and the generating set consisting of unsigned inversions of length 3,  $\{ (1,3), (2,4), (3,5), (4,6), (5,7), (1,6), (2,7)\}$,  and of length 4 $\{(1,4)(2,3), (2,5)(3,4), (3,6)(4,5), (4,7)(5,6), (1,5)(6,7), (1,7)(2,6), (1,2)(3,7)\}$ generate $\cS_7$.
This generating set has the dihedral group as its symmetries, but a 3-inversion is not conjugate to a 4-inversion.
\end{example}


Now we can give a sufficient condition for elements of a group to have order isomorphic intervals, as follows.

\begin{theorem}\label{t:main.intervals}
Let $G$ be a group acting on a set $X$ generated by $S$.
Write $N$ for the normaliser $N_{G}(S)$. 
For $g_1,g_2\in G$, we have
\[g_1\sim_N g_2\quad\implies\quad I_{g_1}\sim I_{g_2}
\]
where the $\sim_N$ on the left means ``conjugate under an element of $N$'' and $\sim$ on the right means ``order isomorphism''.
\end{theorem}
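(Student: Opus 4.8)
The plan is to show that conjugation by any element of $N$ induces an order-automorphism of the whole prefix poset $(G,\leq_S)$ that fixes the identity, and then simply restrict it to the interval. First I would record that $N=N_G(S)$ is a subgroup of $G$: if $n_1^{-1}Sn_1=S$ and $n_2^{-1}Sn_2=S$ then $(n_1n_2)^{-1}S(n_1n_2)=S$, and $n^{-1}Sn=S$ forces $nSn^{-1}=S$, so inverses stay in $N$. Fix $n\in N$ and define $\phi_n\colon G\to G$ by $\phi_n(g)=n^{-1}gn$. The crucial elementary fact is that conjugation by $n$ permutes the generators: for each $s\in S$ we have $n^{-1}sn\in S$. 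Consequently, if $w=s_{i_1}\cdots s_{i_k}\in S^*$ is any word, then $n^{-1}\ol{w}n$ is realised by the word $w'=(n^{-1}s_{i_1}n)\cdots(n^{-1}s_{i_k}n)\in S^*$ of the same length $k$. Applying this to a geodesic word for $g$ gives $\ell(\phi_n(g))\le\ell(g)$, and running the same argument with $n^{-1}\in N$ gives the reverse inequality, so $\phi_n$ preserves length: $\ell(\phi_n(g))=\ell(g)$.

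Next I would upgrade this to the statement that $\phi_n$ preserves the prefix order in both directions. Suppose $g\le_S h$, witnessed by a geodesic word $w=s_{i_1}\cdots s_{i_k}$ for $h$ whose length-$j$ prefix realises $g$. The conjugated word $w'$ above realises $\phi_n(h)$ and has length $k=\ell(h)=\ell(\phi_n(h))$, so $w'$ is again a geodesic. Because conjugation distributes over products letter by letter, the length-$j$ prefix of $w'$ realises $n^{-1}gn=\phi_n(g)$; hence $\phi_n(g)\le_S\phi_n(h)$. The converse implication follows by applying the same reasoning to $\phi_{n^{-1}}=\phi_n^{-1}$, so $\phi_n$ is an order-automorphism of $(G,\leq_S)$.

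Finally, since $\phi_n(1_G)=1_G$, any order-automorphism carries the interval $[1_G,g_1]$ onto $[\phi_n(1_G),\phi_n(g_1)]=[1_G,\phi_n(g_1)]$. Taking $n\in N$ with $\phi_n(g_1)=n^{-1}g_1n=g_2$ --- which exists precisely because $g_1\sim_N g_2$ --- the restriction of $\phi_n$ is an order-isomorphism $I_{g_1}\to I_{g_2}$, giving $I_{g_1}\sim I_{g_2}$ as claimed. The one point needing care, and the step I expect to be the main obstacle to write cleanly, is the bookkeeping in the middle paragraph: one must verify that conjugation sends geodesics to geodesics \emph{and} respects the position of every prefix, which is exactly where the length-preservation from the first paragraph and the identity $n^{-1}(ab)n=(n^{-1}an)(n^{-1}bn)$ are both needed together.
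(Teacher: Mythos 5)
Your proposal is correct and follows essentially the same route as the paper: both arguments rest on the observation that conjugation by $\pi\in N$ permutes the generating set $S$ and therefore carries words (equivalently, edge-paths in the Cayley graph) to words of the same length, so that the conjugation map restricts to an order-isomorphism $I_{g_1}\to I_{g_2}$. If anything, your write-up is the more complete of the two, since you explicitly verify that conjugation preserves $\ell$ and hence sends geodesics to geodesics --- the step the paper compresses into ``we can extend the above to show $(u\le w)\implies(\pi^{-1}u\pi\le\pi^{-1}w\pi)$.''
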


\begin{proof}
If $g_1\sim_N g_2$ then there is a $\pi\in N$ such that $\pi^{-1}g_1\pi=g_2$.  Take any edge in the interval $I_{g_1}$, say $w\overset{s}{\to} ws$ for $s\in S$.  Then
\[\pi^{-1}w\pi\overset{\pi^{-1}s\pi}{\longrightarrow}\pi^{-1}w\pi\cdot\pi^{-1}s\pi=\pi^{-1}ws\pi\]
as $\pi^{-1}s\pi$ is a generator in $S$ since $\pi\in N$.  That is, conjugation by elements of $N$ takes edges of $I_{g_1}$ to edges of $I_{g_2}$, defining an order embedding (since we can extend the above to show that $(u\le w)\implies (\pi^{-1}u\pi\le \pi^{-1}w\pi)$ for any $\pi\in N$.

To show this is an order isomorphism it remains to show it's a bijection on the underlying sets of $I_{g_1}$ and $I_{g_2}$.  This follows easily from the definition.  For one to one, if $\pi^{-1}w\pi=\pi^{-1}u\pi$ then $w=u$ since we are in a group.  For onto, the element $v\in I_{g_2}$ has pre-image $\pi v\pi^{-1}\in I_{g_1}$.  
\end{proof}

\begin{example}
A group generated by itself with the prefix order $(G,\leq_G)$ shows that isomorphic interval structure does not imply conjugacy.
\end{example}

\section{The Median Problem}\label{s:median}

In the Cayley graph $\Gamma(G,S)$, a solution to the median problem for $1_G, g_1, g_2 \in G$ corresponds to the vertex of degree $3$ in a Steiner tree connecting them. Note that since $G$ acts vertex-transitively on its Cayley graph by left multiplication, the median problem on any set of three group elements can be translated back to this problem in which one element is the identity. 
As a computationally difficult problem, it is useful to narrow the search for this vertex to a smaller subset of Cayley graph called the interior. To do this, we use the intervals between each pair of genomes. 

For group elements $1_G$, $g_1$ and $g_2$ begin by constructing the intervals $I_{g_1}$, $I_{g_2}$ and $[g_1, g_2]$.
Next, calculate the distances  $d(1_G,[g_1,g_2]) := \min\{ d(1_G,x) : x\in [g_1,g_2]\}$, $d\big(g_1,I_{g_2}\big)$ and $d\big(g_2,I_{g_1}\big)$. Denote these distance $\delta(1_G)$, $\delta(g_1)$ and $\delta(g_2)$ respectively. Finally, for $g \in G$ and $\delta\in\N$, denote by $\ol{B}_{\delta}(g)$ the set $\{h \in G \mid d(g,h) \leq \delta\}$, the \emph{closed ball} of radius $\delta$.

\begin{definition}[Interior] For group elements $1_G$, $g_1$ and $g_2$, their interior is the intersection 
$$\ol{B}_{\delta(1_G)}(1_G)\cap \ol{B}_{\delta(g_1)}(g_1)\cap \ol{B}_{\delta(g_2)}(g_2).$$

\end{definition}

Let $p(h_1,h_2)$ be a geodesic path between $h_1$ and $h_2$ in $G$. If $h_2 = 1_G$, this is written $p(h_1)$. In choosing the geodesic $p(g_2) \in \Geo(g_2)$ such that $d\big(g_1, p(g_2)\big)=\delta(g_1)$ (and making similar choices for $1_G$ and $g_2$), the geodesic triangle $p(g_1)\cup p(g_2) \cup p(g_1,g_2)$ is formed. 
Knowledge of the intervals ensures that minimal distances $\delta(1_G)$, $\delta(g_1)$ and $\delta(g_2)$ can be obtained. 

We now show that the interior must contain all solutions to the median problem for $1_G$, $g_1$ and $g_2$.

\begin{definition}[Steiner weight] 
The Steiner weight of a group element $h$, denoted $\sw(h)$, with respect to $1_G$, $g_1$ and $g_2$ is 
$$ \sw(h):=\displaystyle \sum_{\gamma \in \{1_{G},g_2,g_2\}} d(\gamma, h).  $$
\end{definition}

\begin{prop}\label{p:interior}
The interior of $1_G$, $g_1$ and $g_2$ contains all solutions to their median problem.
\end{prop}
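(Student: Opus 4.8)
The plan is to compare, for an arbitrary median $m$, its Steiner weight $\sw(m)$ against the Steiner weight of the three distinguished points that realise $\delta(1_G)$, $\delta(g_1)$ and $\delta(g_2)$, and then to read off the three ball-membership inequalities from the minimality of $\sw(m)$ together with the triangle inequality. The single structural fact I would isolate first is that a point $x$ lying on an interval $[a,b]$ sits, by definition of the prefix order, on a geodesic from $a$ to $b$, so distances add up along it: $d(a,x)+d(x,b)=d(a,b)$ (for intervals not based at the identity this is Fact~\ref{f:distance.invariant} applied to the prefix-order definition).

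For the inequality $d(1_G,m)\le\delta(1_G)$ I would pick a point $x^{*}\in[g_1,g_2]$ achieving the minimum $d(1_G,x^{*})=\delta(1_G)$, which exists because $\delta(1_G)$ is defined as such a minimum. Additivity along $[g_1,g_2]$ gives $d(g_1,x^{*})+d(g_2,x^{*})=d(g_1,g_2)$, so its Steiner weight collapses to
\[\sw(x^{*})=\delta(1_G)+d(g_1,g_2).\]
Since $m$ is a median, $\sw(m)\le\sw(x^{*})$; expanding $\sw(m)=d(1_G,m)+d(g_1,m)+d(g_2,m)$ and using the triangle bound $d(g_1,m)+d(g_2,m)\ge d(g_1,g_2)$, the term $d(g_1,g_2)$ cancels and leaves exactly $d(1_G,m)\le\delta(1_G)$, i.e.\ $m\in\ol{B}_{\delta(1_G)}(1_G)$.

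The remaining two inclusions follow by permuting the roles of the three vertices. To bound $d(g_1,m)$ I would take the point of $I_{g_2}=[1_G,g_2]$ closest to $g_1$; additivity along the geodesic from $1_G$ to $g_2$ makes its Steiner weight equal to $d(1_G,g_2)+\delta(g_1)$, and cancelling against $d(1_G,m)+d(g_2,m)\ge d(1_G,g_2)$ yields $d(g_1,m)\le\delta(g_1)$. The argument for $d(g_2,m)\le\delta(g_2)$ is identical with $g_1$ and $g_2$ interchanged, and together the three inequalities place $m$ in the triple intersection defining the interior. I do not anticipate a real obstacle: the only step needing care is checking that each $\delta$ is realised by a point whose Steiner weight collapses to ``length of the opposite edge plus that $\delta$'', which is precisely where the geodesic additivity of intervals is used; once that collapse is in place, minimality of $\sw(m)$ and one triangle inequality finish each case.
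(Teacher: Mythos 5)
Your proof is correct and takes essentially the same approach as the paper: both compare the Steiner weight of a candidate median against that of a witness point on the opposite edge realising $\delta(\cdot)$, using geodesic additivity along the interval plus one triangle inequality. The only difference is presentational — you argue directly from $\sw(m)\le\sw(x^{*})$ to the ball inclusion, whereas the paper phrases it contrapositively (any point outside a ball has strictly larger Steiner weight than the witness).
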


\begin{proof}
Suppose that $y \in \Gamma(G,S) \setminus \ol{B}_{\delta(1_G)}(1_G)$  where $\ell(y) = \delta(1_G) + k$ for $k \in \N$ with $k > 0$. Without loss of generality, let $z$ be a vertex in $\Geo(g_1,g_2)$ such that $\ell(z) = \delta(1_G)$. Finally, let $x$ be a vertex on a geodesic path $p(y)$ where $\ell(x) = \delta(1_G)$ and $\ell(y) = \ell(x) + k$ so that $\ell(y) = \ell(z) + k$. 

By the triangle inequality, we have that $d(g_1,g_2) \leq d(g_1, y) + d(g_2,y)$. Therefore, 
$$
d(z,g_1) + d(z, g_2) \leq d(y, g_1) + d(y,g_2)
$$
since $d(g_1,g_2) = d(z,g_1) + d(z,g_2)$. Adding $\ell(z)$ to both sides and then $k$ to the right hand side only yields 
$$
d(z,g_1) + d(z, g_2) + \ell(z) < d(y, g_1) + d(y,g_2) + \ell(z) + k. 
$$
Since $\ell(y) = \ell(z) + k$, we have that $\sw(z) < \sw(y)$. The same argument holds when $y \in \Gamma(G,S) \setminus \ol{B}_{\delta(g_1)}(g_1)$ and $y \in \Gamma(G,S) \setminus \ol{B}_{\delta(g_2)}(g_2)$. Therefore, the interior must contain a solution to the median problem. Assuming there is a solution $a$ to the median problem outside of the interior yields a contradiction, since there must exist a point $b$ in the interior with $\sw(b) < \sw(a)$. Thus, the interior contains all solutions to the median problem.  
\end{proof}

The solution to the median problem may be not be unique. Here, we consider the Cayley graph of the symmetric group generated by the set of unsigned inversions $\{(1,2),\dots, (n,1)\}$. The presentation of the symmetric group under these relations is as follows \cite{egri2014group}.

\begin{equation*}
\begin{aligned}[c]
s_{i}^{2} = 1 & \, \, \, \, \text{for each $i = 1, \dots, n$;} \\
s_is_j=s_js_i & \, \, \, \, \text{if $(i-j)\mod n \neq \pm 1$;} \\
s_is_{i+1}s_i = s_{i+1}s_is_{i+1} &  \, \, \, \, \text{for each $i = 1, \dots, n-1$;} \\
s_n = s_{n-1}s_{n-2}\dots s_2s_1s_2\dots s_{n-2}s_{n-1}.
\end{aligned}
\end{equation*}
Note that the relations imply that the braid relation $s_ns_1s_n=s_1s_ns_1$ also applies to $s_n$ and $s_1$, and that all these relations preserve the \emph{parity} of the word-length.

In order to narrow the search for solutions to the median problem, we show that the distance between any two solutions in this Cayley graph must be even. 

\begin{prop}\label{p:even}
Let $1_G$, $g_1$ and $g_2$ be elements of the symmetric group with the above group presentation. If $m_1$ and $m_2$ are solutions to the median problem for $1_G$, $g_1$ and $g_2$, then $d(m_1,m_2)$ is even.
\end{prop}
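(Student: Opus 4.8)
The plan is to reduce the statement to the parity observation recorded just before it: every defining relation equates two words of the same length-parity, so the parity of word-length is an invariant of the group element. Most directly, since each generator $s_i$ is a transposition, the sign homomorphism sends any word of length $k$ to $(-1)^k$; hence all words representing a fixed $g$ share the same length-parity, and $\ell(g)$ has exactly that parity. Writing $\nu(g)\in\Z/2\Z$ for this parity, I record the two properties I need: $\ell(g)\equiv\nu(g)\pmod 2$, and $\nu$ is a homomorphism, so $\nu(g^{-1}h)=\nu(g)+\nu(h)$.

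First I would convert the three distances appearing in $\sw$ into lengths using Fact~\ref{f:distance.invariant}: taking $g=g_1^{-1}$ there gives $d(g_1,g_2)=\ell(g_1^{-1}g_2)$, and in general $d(\gamma,h)=\ell(\gamma^{-1}h)\equiv\nu(\gamma)+\nu(h)\pmod 2$. Summing over $\gamma\in\{1_G,g_1,g_2\}$ and using $\nu(1_G)=0$, the three copies of $\nu(h)$ collapse to one modulo $2$, giving
\[
\sw(h)\equiv \nu(h)+\nu(g_1)+\nu(g_2)\pmod 2.
\]
Thus the parity of $\sw(h)$ depends on $h$ only through $\nu(h)$, with $\nu(g_1),\nu(g_2)$ fixed.

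Finally, I would use that $m_1$ and $m_2$ are both medians, so $\sw(m_1)=\sw(m_2)$ (each equals the minimum Steiner weight). Comparing parities in the displayed congruence cancels $\nu(g_1)+\nu(g_2)$ and forces $\nu(m_1)=\nu(m_2)$. Then $d(m_1,m_2)=\ell(m_1^{-1}m_2)\equiv\nu(m_1)+\nu(m_2)=0\pmod 2$, so $d(m_1,m_2)$ is even.

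The only delicate step is the very first one: that length-parity is well-defined on group elements. I would justify it either from the presentation (any two words for the same element differ by finitely many applications of the listed relations, each of which preserves length-parity --- note in particular that the relation $s_n=s_{n-1}\cdots s_1\cdots s_{n-1}$ equates words of lengths $1$ and $2n-3$, both odd) or, without reference to the presentation at all, via the sign homomorphism, since every generator is a transposition. Everything after this is routine parity bookkeeping, so the proposition is in essence a corollary of the parity invariance of this generating set, and I expect the write-up to be short.
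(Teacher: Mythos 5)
Your proof is correct, and it reaches the same conclusion by a cleaner, more global route than the paper. The paper's proof takes an odd-length word $u$ with $m_1\ol{u}=m_2$ and tracks, one generator at a time, how each of the three summands of $\sw$ changes: each multiplication by a generator changes each distance by an odd amount (it asserts $+1$ or $-\mathcal{O}$ with $\mathcal{O}$ a positive odd number, using that all relations preserve word-length parity), so $\sw(m_1\ol{u})-\sw(m_1)$ is a sum of $3(2k+1)$ odd terms, hence odd and nonzero, contradicting both points attaining the minimal Steiner weight. You instead package the same parity invariance into the sign homomorphism and derive the closed congruence $\sw(h)\equiv\nu(h)+\nu(g_1)+\nu(g_2)\pmod 2$, from which $\sw(m_1)=\sw(m_2)$ immediately forces $\nu(m_1)=\nu(m_2)$ and hence $d(m_1,m_2)=\ell(m_1^{-1}m_2)$ even. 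The underlying observation is identical (the paper records it just before the proposition), but your formulation avoids the incremental bookkeeping and the case analysis on the quantities $a$ and $b$, makes explicit the one fact both arguments need (all medians share the same, minimal, Steiner weight), and generalises verbatim to any generating set consisting entirely of odd permutations, whereas the paper's write-up is phrased in terms of the specific circular presentation. Your aside that the last relation equates words of lengths $1$ and $2n-3$ is a worthwhile sanity check but, as you note, is not even needed once you invoke the sign homomorphism directly.
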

\begin{proof}
Recall that the geodesic distance between $m_1$ and a group element $g$ is defined by $d(m_1,g)=|u|$,
where $u$ is a minimal length word in $S^*$ and $m_1 \overline{u} = g$. Let $u = s_1 \dots s_{2k+1}$ where $s_i \in S$ are inversions and $k \in \mathbb{N}\cup\{0\}$. We show that $m_1u \neq m_2$ for any such word $u$ and so $d(m_1,m_2)$ must be even. Considering the product $m_1s_1$, either $m_1s_1$ is a reduced word and $|m_1s_1| = |m_1|+1$ or $m_1s_1$ admits a reduction under the relations of the group's presentation. For any relation of the form $w_1 = w_2$ where $w_1$ and $w_2$ are words in $S^*$, the absolute value of $|w_1| - |w_2|$ is even since $w_1$ and $w_2$ are permutations of the same parity. Therefore, $\ell(m_1s_1)$ is either $\ell(m_1) + 1$ or $\ell(m_1) - \mathcal{O}_1$ where $\mathcal{O}_1$  is a positive odd number.  Similarly, $d(m_1s_1, g_1)$ is either $d(m_1, g_1) + 1$ or $d(m_1, g_1) - \mathcal{O}_2$ and $d(m_1s_1, g_2)$ is either $d(m_1, g_2) + 1$ or $d(m_1, g_2) - \mathcal{O}_3$. There are a number of values that $\sw(m_1s_1)$ can take given any combination of possible length changes. To summarise 
$$\sw(m_1s_1) =  \sw(m_1) + a - b$$
where $a \in \mathbb{N}\cup\{0\}$ and $a \leq 3$, and $b$ is a sum of odd numbers if $a < 3$. If $a = 3$, then $b=0$.  Extending this to an arbitrary odd number $2k+1$, we have that $\sw(m_1u) = \sw(m_1) +  a - b$ where $a-b$ is a sum of $3(2k+1)$ terms. If $a$ is odd, then $b$ is a summation consisting of an even number of positive odd integers. If $a$ is even, then $b$ is a summation consisting of an odd number of positive odd integers. In both instances, $a$ and $b$ have different parity. If $a > b$, then $\sw(m_1u) > \sw(m_1)$ and so $m_1u$ is not a median point. If $a < b$, then $\sw(m_1u) < \sw(m_1)$ contradicting the fact that $m_1$ is a median point. Therefore, $d(m_1, m_2)$ cannot be odd and so an even distance separates median points in this group presentation.  \end{proof}

\section{Algorithms and Computational issues}\label{s:algorithms}
Assuming that we can calculate the length efficiently, there is a straightforward algorithm for constructing the interval.
\begin{fact}$g_1\leq g_2 \Longleftrightarrow \ell(g_2)=\ell(g_1)+\ell(g_1^{-1}g_2).$
\end{fact}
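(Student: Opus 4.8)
The plan is to establish the two implications separately, in each case translating directly between the combinatorial definition of $\le$ and the additivity of word lengths. The one structural observation I would isolate first is that any factorisation $w = uv$ of a \emph{geodesic} word yields geodesic words for both factors: if, say, $u$ were not geodesic for $\ol u$, I could replace it by a strictly shorter $u'$ with $\ol{u'} = \ol u$, whereupon $u'v$ would represent $\ol w$ with $|u'v| < |w|$, contradicting that $w \in \Geo(\ol w)$; a symmetric argument handles the suffix $v$. This little exchange lemma is the engine of the forward direction.

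For the forward implication I would start from $g_1 \le g_2$ and unpack the definition of the prefix order: there is a geodesic word $w = uv \in \Geo(g_2)$ with $\ol u = g_1$. By the factorisation observation, $u$ is geodesic for $g_1$, so $|u| = \ell(g_1)$, and $v$ is geodesic for $\ol v = \ol u^{-1}\ol w = g_1^{-1}g_2$, so $|v| = \ell(g_1^{-1}g_2)$. Since $w$ itself is geodesic, $\ell(g_2) = |w| = |u| + |v| = \ell(g_1) + \ell(g_1^{-1}g_2)$, which is exactly the claimed identity.

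For the converse I would argue by explicit construction. Assuming $\ell(g_2) = \ell(g_1) + \ell(g_1^{-1}g_2)$, choose any geodesic word $u$ for $g_1$ and any geodesic word $v$ for $g_1^{-1}g_2$, and set $w = uv$. Then $\ol w = g_1\cdot g_1^{-1}g_2 = g_2$ and $|w| = |u| + |v| = \ell(g_1) + \ell(g_1^{-1}g_2) = \ell(g_2)$, so $w$ attains the geodesic length and hence lies in $\Geo(g_2)$. As $u$ is a prefix of $w$ with $\ol u = g_1$, this places $g_1$ on a geodesic from $1_G$ to $g_2$, i.e.\ $g_1 \le g_2$, as required.

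I do not anticipate any genuine obstacle: the whole content sits in the factorisation observation of the first paragraph, and even that is a one-line shortening argument. The only points meriting a moment's care are keeping the roles of prefix and suffix straight and recording $\ol v = g_1^{-1}g_2$ correctly. It is worth noting in passing that, via $\ell(g_1^{-1}g_2) = d(g_1,g_2)$ (Fact~\ref{f:distance.invariant}), this Fact is precisely the statement that $g_1$ lies on a geodesic to $g_2$ exactly when the distances add up, which is the intuitive picture behind the prefix order.
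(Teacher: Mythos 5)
Your proof is correct. The paper states this Fact without any proof at all, so there is nothing to compare against; your argument --- the prefix/suffix factorisation lemma for geodesic words giving the forward direction, and concatenation of geodesics for $g_1$ and $g_1^{-1}g_2$ giving the converse --- is exactly the standard argument the authors presumably had in mind, and it correctly uses $\ell(g_1^{-1}g_2)=d(g_1,g_2)$ via invariance of distance under left multiplication.
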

For finding the geodesics, instead of a brute-force search in every direction, we can quickly discard those paths where the sum of the distance from the start and the remaining distance to the destination is more than the length of a shortest path.
This allows us to both study small (but non-trivial) cases of intervals in groups as well as to count/estimate the number of geodesics without constructing the interval explicitly.

\begin{algorithm}
\SetKwInOut{Input}{input}\SetKwInOut{Output}{output}
\Input{$g,h\in G$,\\$S$ generator set,\\$d$ distance function}
\Output{$[g,h]$ interval,\\$R_i$ rank-sets,\\$I_{g'}=\{(s,g's)\mid s\in S,\ g's\in [g,h]\}$}
\SetKwFunction{LC}{GradedInterval}
\SetKwInOut{Name}{\LC($g,h,S,d$)}
\SetKwData{Min}{min}
\SetKwFunction{Size}{Size}
\BlankLine
\Name{}
$n\leftarrow d(g,h)$\;
$R_0\leftarrow \{g\}$\;
\ForEach{$i\in \{1,\ldots,n\}$}{
  $R_i\leftarrow \varnothing$\;
  \ForEach{$g'\in R_{i-1}$}{
    $I_{g'}\leftarrow \varnothing$\;
    \ForEach{$s\in S$}{
      \If{$d(g's,h)=n-i$} {
        $R_i\leftarrow R_i\cup g's$\;
        $I_{g'}\leftarrow I_{g'}\cup (s,g's)$\;
      }
    }
  }
}
\caption{Constructing the graded interval \protect{$[g,h]$}.}
\label{alg:gradedinterval}
\end{algorithm}

In practice it could be unfeasible to fully calculate the graded interval.
Fortunately, Algorithm \ref{alg:gradedinterval} is iterative, so we can estimate the size of intervals by calculating only the first (and the last) $k$ grades. The reliability of these estimates depends on the particular generator set, whether the initial growth and the final shrinking rate of the intervals are proportional to their sizes.

For the median calculating algorithm we need to implement `geometrical' functions, like calculating spheres, balls, distances of points from intervals. Similar considerations apply here, one would like to calculate only the relevant `circular sector' guided by the distance from the other two corners of the triangle. 

The baseline versions of the above  algorithms are implemented in the \textsc{BioGAP} computer algebra package \cite{biogap} providing a solid base for experimenting with optimized algorithms.

\subsection*{Acknowledgements}{This research was supported by Australian Research Council grants DP130100248 and FT100100898.}

\bibliographystyle{plain}
\bibliography{caymedint}

\begin{thebibliography}{10}

\bibitem{bader2001linear}
D.A. Bader, B.M.E. Moret, and M.~Yan.
\newblock {A linear-time algorithm for computing inversion distance between
  signed permutations with an experimental study}.
\newblock {\em Journal of Computational Biology}, 8(5):483--491, 2001.

\bibitem{bjorner2005combinatorics}
Andreas Bj\"orner and Francesco Brenti.
\newblock {\em Combinatorics of Coxeter Groups}.
\newblock Graduate Texts in Mathematics. Springer, 2005.

\bibitem{PevznerBourqe2002}
Guillaume Bourque and Pavel~A. Pevzner.
\newblock {Genome-Scale Evolution: Reconstructing Gene Orders in the Ancestral
  Species}.
\newblock {\em Genome Research}, 12(1):26--36, January 2002.

\bibitem{caprara2003reversal}
A.~Caprara.
\newblock {The reversal median problem}.
\newblock {\em INFORMS Journal on Computing}, 15(1):93, 2003.

\bibitem{caspard2012finite}
N.~Caspard, B.~Leclerc, and B.~Monjardet.
\newblock {\em Finite Ordered Sets: Concepts, Results and Uses}.
\newblock Encyclopedia of Mathematics and its Applications 144. Cambridge
  University Press, 2012.

\bibitem{darling2004mauve}
Aaron~CE Darling, Bob Mau, Frederick~R Blattner, and Nicole~T Perna.
\newblock Mauve: multiple alignment of conserved genomic sequence with
  rearrangements.
\newblock {\em Genome Research}, 14(7):1394--1403, 2004.

\bibitem{Lattices2ndEd}
B.~A. Davey and H.~A. Priestley.
\newblock {\em {Introduction to Lattices and Order, Second Edition}}.
\newblock Cambridge University Press, 2002.

\bibitem{biogap}
Attila Egri-Nagy, Andrew~R. Francis, and Volker Gebhardt.
\newblock Bacterial genomics and computational group theory: The {B}io{G}{A}{P}
  package for {G}{A}{P}.
\newblock In Hoon Hong and Chee Yap, editors, {\em Mathematical Software --
  ICMS 2014}, volume 8592 of {\em Lecture Notes in Computer Science}, pages
  67--74. Springer Berlin Heidelberg, 2014.
\newblock
  \href{https://github.com/egri-nagy/biogap}{https://github.com/egri-nagy/biogap}.

\bibitem{egri2014group}
Attila Egri-Nagy, Volker Gebhardt, Mark~M Tanaka, and Andrew~R Francis.
\newblock Group-theoretic models of the inversion process in bacterial genomes.
\newblock {\em Journal of Mathematical Biology}, 69(1):243--265, 2014.

\bibitem{fertin2009combinatorics}
Guillaume Fertin.
\newblock {\em Combinatorics of genome rearrangements}.
\newblock MIT press, 2009.

\bibitem{francis2014algebraic}
Andrew~R Francis.
\newblock An algebraic view of bacterial genome evolution.
\newblock {\em Journal of Mathematical Biology}, 69(6):1693--1718, 2014.

\bibitem{GAP4}
The GAP~Group.
\newblock {\em {GAP -- Groups, Algorithms, and Programming, Version 4.7.8}},
  2015.

\bibitem{gascuel2005mathematics}
Olivier Gascuel.
\newblock {\em Mathematics of evolution and phylogeny}.
\newblock Oxford University Press, 2005.

\bibitem{hannenhalli1995transforming}
S.~Hannenhalli and PA~Pevzner.
\newblock {Transforming men into mice (polynomial algorithm for genomicdistance
  problem)}.
\newblock In {\em Foundations of Computer Science, 1995. Proceedings., 36th
  Annual Symposium on}, pages 581--592, 1995.

\bibitem{sankoff1992edit}
David Sankoff.
\newblock Edit distance for genome comparison based on non-local operations.
\newblock In {\em Combinatorial Pattern Matching}, pages 121--135. Springer,
  1992.

\end{thebibliography}

\end{document}